\documentclass[12pt]{amsart}

\usepackage[utf8x]{inputenc}
\usepackage{amssymb,latexsym}
\usepackage{amsmath}
\usepackage{graphicx}

\newtheorem{theorem}{Theorem}[section]
\newtheorem{lemma}[theorem]{Lemma}

\newtheorem{conjecture}{Conjecture}[section]

\newtheorem{claim}[theorem]{Claim}

\newtheorem{question}[theorem]{Question}

\renewcommand{\geq}{\geqslant}
\renewcommand{\leq}{\leqslant}
\renewcommand{\ge}{\geqslant}
\renewcommand{\le}{\leqslant}

\def\cref#1{Corollary~$\ref{#1}$}

\def\b1{\bar{1}}
\def\cb1{\cdot \bar{1}}

\usepackage{array,color,colortbl}

\title{Tournaments and the Strong Erd\H{o}s-Hajnal Property}

\author[E. Berger]{Eli Berger}
\author[K. Choromanski]{Krzysztof Choromanski}
\author[M. Chudnovsky]{Maria Chudnovsky}\thanks{Maria Chudnovsky was supported by NSF grant DMS-1763817. 
This material is based upon work supported in part by the U. S. Army
Research Laboratory and the U. S. Army Research Office under grant
number W911NF1610404.}
\author[S. Zerbib]{Shira Zerbib}\thanks{Shira Zerbib was supported by  NSF grant DMS-1953929.\\ \indent Eli Berger, Maria Chudnovsky and Shira Zerbib were supported by  US-Israel 
BSF grant 2016077.}


\date{}

\begin{document}

\maketitle
\begin{abstract}
A conjecture of Alon, Pach and Solymosi, which is equivalent to the celebrated Erd\H{o}s-Hajnal Conjecture, states  that for every tournament $S$ there exists $\varepsilon(S)>0$ such that if $T$ is an $n$-vertex tournament that does not contains $S$ as a subtournament, then $T$ contains a transitive subtournament on at least $n^{\varepsilon(S)}$ vertices.  
Let $C_5$ be the unique five-vertex tournament where every vertex has two inneighbors and two outneighbors.
The Alon-Pach-Solymosi conjecture is known to be true for the case when  $S=C_5$. 
Here we prove a strengthening of this result, showing that
in every tournament $T$ with no subtorunament isomorphic to $C_5$ there exist
disjoint vertex subsets $A$ and $B$, each containing a linear proportion of the vertices of $T$,
and such that every vertex of $A$ is adjacent to every vertex of $B$. 
\end{abstract}

\section{introduction}
A {\em tournament} is a complete graph with directions on edges. A tournament is {\em transitive} if it has no directed triangles. For tournaments $S,T$ we say that $T$ is {\em $S$-free} if no subtournament of $T$ is 
isomorphic to $S$.  
In \cite{APS} a conjecture was made
 concerning tournaments with a fixed forbidden subtournament:
 
 \begin{conjecture}
\label{EHtourn}
 For every tournament $S$ there exists $\varepsilon>0$ such that every $S$-free  $n$-vertex tournament
contains a transitive subtournament on at least $n^{\varepsilon}$ vertices. 
 \end{conjecture}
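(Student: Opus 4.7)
My plan is to attack Conjecture~\ref{EHtourn} via the standard reduction to the \emph{strong Erd\H{o}s--Hajnal} (SEH) property: call a tournament $S$ an SEH tournament if there exists $c=c(S)>0$ such that every $S$-free tournament $T$ on $n\geq 2$ vertices contains disjoint subsets $A,B\subseteq V(T)$ with $|A|,|B|\geq cn$ and every edge between $A$ and $B$ oriented from $A$ to $B$. A straightforward iterative argument---find such a pair $(A,B)$, recurse inside $A$ and inside $B$ to produce transitive subtournaments $T_A, T_B$, and concatenate $T_A$ before $T_B$---shows that any SEH tournament $S$ satisfies Conjecture~\ref{EHtourn} with an explicit exponent $\varepsilon(S)$ computable from $c(S)$. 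So the task reduces to proving that \emph{every} tournament has the SEH property.

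Next I would exploit substitution closure of the SEH class. If $S$ is obtained by substituting a tournament $R$ for a vertex of a tournament $Q$, then whenever both $Q$ and $R$ have SEH, so does $S$: an $S$-free tournament either is $Q$-free (apply SEH for $Q$) or contains many copies of $Q$ in which the blown-up vertex must sit in an $R$-free set (apply SEH for $R$). This reduces the whole conjecture to proving SEH for \emph{prime} tournaments (those admitting no nontrivial homogeneous set), which I would then approach by induction on $|V(S)|$, assuming SEH for every tournament on fewer vertices. The $C_5$ case covered in this paper supplies the first nontrivial base of this induction.

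For the inductive step, given a prime $S$ on $k\geq 3$ vertices and an $S$-free $T$, I would pick a carefully chosen subtournament $S^{-}\subsetneq S$ (for instance, $S$ minus a cleverly selected vertex) and study the copies of $S^{-}$ in $T$. Either $T$ is $S^{-}$-free and induction directly supplies an SEH pair, or a typical copy of $S^{-}$ extends in many ways into near-$S$ configurations. In the latter case, the fact that no extension actually completes to $S$ should restrict the adjacencies between the common in-neighborhood and common out-neighborhood of a well-chosen subset of such an $S^{-}$-copy, yielding two large sets $A_0, B_0$ with strongly biased edge orientation. A regularity/cleanup step---analogous to the one the authors carry out for $C_5$---would then trim $A_0, B_0$ down to linear-sized $A, B$ with every edge oriented from $A$ to $B$.

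The crux of the difficulty---and the reason Conjecture~\ref{EHtourn} has been open for decades---is precisely this inductive step for general prime $S$. For small, highly symmetric cases like $C_5$ one can enumerate how an $S^{-}$-copy can extend and force a biased orientation by hand, but for arbitrary prime $S$ there is no known structural mechanism that converts local $S$-freeness into the global bipartite bias required by SEH. Pushing the induction through would seem to demand either a genuinely new tournament decomposition theorem (in the spirit of Chudnovsky--Seymour's structural work for graph classes) or a regularity-type statement tailored to forbidden subtournaments; short of such a tool, the inductive step is the main obstacle I anticipate.
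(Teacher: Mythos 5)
The statement you were asked about is Conjecture~\ref{EHtourn} itself --- it is an open conjecture, and the paper does not (and does not claim to) prove it; it only establishes the special case $S=C_5$ via Theorem~\ref{mainC5}. Your text is a research program rather than a proof, and you say so yourself: the ``inductive step for general prime $S$'' is exactly the open problem, so nothing is actually proved beyond the standard and correct observation that the strong Erd\H{o}s--Hajnal (SEH) property for $S$ implies \ref{EHtourn} for $S$ (this is the reduction already noted in the paper, citing \cite{EHtourn}).

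More seriously, the route you propose cannot be pushed through even in principle, because its target is false: it is \emph{not} the case that every tournament (nor every prime tournament) has the SEH property. The paper points to \cite{trees} for a list of necessary conditions on $S$ for the class of $S$-free tournaments to have SEH; these conditions fail for many tournaments, including prime ones, so ``prove SEH for all prime $S$ and conclude \ref{EHtourn}'' is a reduction to a false statement. That is precisely why Question~\ref{strongEH} asks \emph{for which} $S$ SEH holds, rather than asserting it for all $S$. Your substitution step is also shaky on its own terms: the argument you sketch (``the blown-up vertex must sit in an $R$-free set'') is the classical one showing the \emph{ordinary} Erd\H{o}s--Hajnal property is substitution-closed, but it does not produce the two \emph{linear-sized} sets with all edges oriented one way that SEH demands --- the $R$-free set it yields need not have linear size, and even when it does, SEH inside it gives a pure pair only within that set, not the global pair required. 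So the gap is twofold: the inductive core is missing (as you acknowledge), and the framework itself (SEH for all prime tournaments) contradicts known obstructions. Any genuine attack on \ref{EHtourn} has to work with the weaker polynomial conclusion directly (as in \cite{EHtourn,EHsix}, via $\varepsilon$-critical tournaments and structures as in Theorem~\ref{smooththeorem}) rather than through linear pure pairs.
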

 
 It was shown in \cite{APS} that Conjecture~\ref{EHtourn} is equivalent to the Erd\H{o}s-Hajnal Conjecture \cite{EH1,EH2}.
Conjecture~\ref{EHtourn} is known to hold for a few types of tournaments $S$ \cite{EHtourn, EHsix}, but is still wide open in general.
 
 Let $D$ be a directed graph, and let $A, B \subseteq V(D)$ with $A \cap B =\emptyset$.
We say that $A$ is {\em complete to}  $B$ if all every vertex of $A$ is adjacent to every vertex of $B$,
and that $A$ is {\em complete  from} $B$ if every vertex of $A$ is adjacent from every vertex of $B$.
  A class of tournaments is {\em hereditary} if it is closed under subtournaments. 
 A hereditary class of tournaments $\mathcal{T}$ has the 
{\em strong Erd\H{o}s-Hajnal property} if there exists $\varepsilon = \varepsilon(\mathcal{T})$ such that for every $T\in \mathcal{T}$ there exist disjoint subsets $A$, $B$ of $V(T)$, each of size  $\varepsilon|V(T)|$ such that $A$ is complete to  $B$. 

The following question is closely related to Conjecture \ref{EHtourn}.
\begin{question}
\label{strongEH}
 For  which tournaments $S$ does the class of $S$-free tournaments have the strong
 Erd\"{o}s-Hajnal property?
 \end{question}

It is easy to see \cite{EHtourn} that if $S$ is a tournament, and the class of $S$-free graphs has the strong Erd\"{o}s-Hajnal property, then 
\ref{EHtourn} is true for $S$. In \cite{trees} there is a list of necessary conditions for a tournament $S$ to satisfy  \ref{strongEH}.

Denote by $C_5$ the (unique) tournament on 5 vertices in which every vertex is adjacent to exactly two other vertices. One way to construct this tournament is with vertex set $\{0,1,2,3,4\}$ and $i$ is adjacent to $i+1  \mod 5$ and $i+2 \mod 5$ (see Figure \ref{figC5}).

\begin{figure}
\begin{center}
\includegraphics[width=4in]{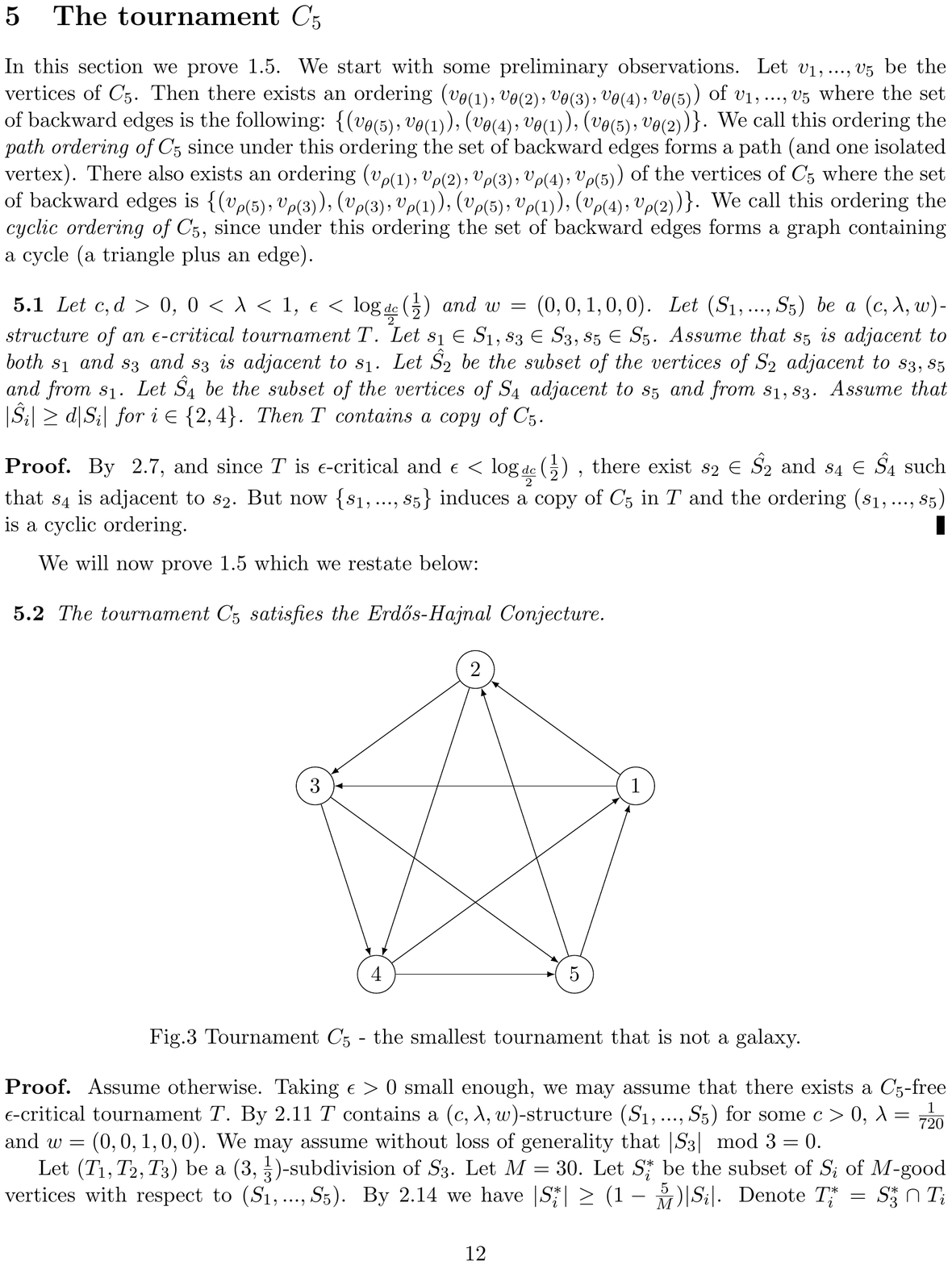}
\end{center}
\caption{The $C_5$ tournament.}
\label{figC5}
\end{figure}

In \cite{EHtourn} it was proved that $C_5$ satisfies the Erd\H{o}s-Hajnal conjecture. Here we prove the following stronger result:

\begin{theorem}\label{mainC5}
The class of $C_5$-free tournaments has the strong Erd\H{o}s-Hajnal  property.
\end{theorem}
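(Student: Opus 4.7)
The plan is to extend the proof of the Erd\H{o}s-Hajnal property for $C_5$-free tournaments from~\cite{EHtourn} to produce a pair of complete-to sets of linear, rather than merely polynomial, size. The strategy has two ingredients: a local structural analysis of the neighborhood of an arbitrary vertex, exploiting the $C_5$-free hypothesis, followed by an iterative cleanup scheme to promote local bipartite density into a genuinely complete bipartite pattern on linear-size sides.

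Fix $\varepsilon > 0$ to be determined, and let $T$ be a $C_5$-free tournament on $n$ vertices. For each vertex $v$, write $N^+(v)$ and $N^-(v)$ for the out- and in-neighborhoods. The $C_5$-freeness of $T$ forbids each 5-vertex subset $\{v,a,b,c,d\}$ with $a,b \in N^+(v)$ and $c,d \in N^-(v)$ from inducing a $C_5$. Enumerating the embeddings of $C_5$ with $v$ in each of the five possible roles yields a short list of forbidden bipartite patterns between $N^+(v)$ and $N^-(v)$, parametrised by the internal edges among $\{a,b\}$ and among $\{c,d\}$. This is essentially the same local analysis that underlies the argument in~\cite{EHtourn}, but we will push it further to recover both sides of the complete-to pair simultaneously.

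Next, I would apply an averaging argument over $v$ (or a pigeonhole across the internal edge types of $N^+(v)$ and $N^-(v)$) to identify a vertex $v^\ast$ whose neighborhoods realize a highly biased bipartite pattern: a positive fraction of the pairs $(y,x) \in N^-(v^\ast) \times N^+(v^\ast)$ satisfies $y \to x$, which is the direction required for the complete-to structure. The natural first guess is $A = \{v^\ast\} \cup N^-(v^\ast)$ and $B = N^+(v^\ast)$, and the remaining task is to upgrade a constant bipartite density into full bipartite completeness on linear-size subsets.

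The main obstacle will be this final density-to-completeness step: a constant fraction of correctly oriented pairs alone yields only poly-logarithmic complete bipartite subgraphs via K\H{o}v\'ari--S\'os--Tur\'an, far short of the linear sizes required by the strong Erd\H{o}s-Hajnal conclusion. To close the gap I would invoke $C_5$-freeness a second time, arguing that the bad pairs (those oriented $x \to y$ rather than $y \to x$) cannot cluster arbitrarily, since a large collection of bad pairs together with a suitably chosen auxiliary vertex of $T$ would force a copy of $C_5$. This additional constraint should enable an iterative removal procedure, discarding a bounded fraction of vertices at each stage until no bad pair survives, thereby producing linear-size $A$ and $B$ with $A$ complete to $B$. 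Calibrating the quantitative losses at each stage so that a single positive $\varepsilon$ works uniformly in $n$ will be the most delicate part of the argument, and is what distinguishes this proof from the weaker polynomial-sized conclusion given in~\cite{EHtourn}.
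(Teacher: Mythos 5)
There is a genuine gap, and it sits exactly where you acknowledge the difficulty: the ``density-to-completeness'' step. Your plan produces (at best) a vertex $v^\ast$ such that a constant fraction of the pairs in $N^-(v^\ast)\times N^+(v^\ast)$ are oriented the right way, and then asserts that $C_5$-freeness prevents the badly oriented pairs from ``clustering,'' so that an unspecified iterative removal procedure yields linear-size sets $A$, $B$ with $A$ complete to $B$. No argument is given for either claim. A single bad pair $x\to y$ (with $x\in N^+(v^\ast)$, $y\in N^-(v^\ast)$) only creates a directed triangle through $v^\ast$, which $C_5$-free tournaments may contain in abundance; to force a copy of $C_5$ one needs a very specific mixed pattern of several pairs together with edges inside $N^+(v^\ast)$ and inside $N^-(v^\ast)$, and it is not true in any obvious sense that a positive density of bad pairs produces this pattern. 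Since linear-size complete pairs are precisely what fails for many forbidden tournaments even when density statements hold, this upgrade is the entire content of the theorem and cannot be waved through; as written, the proposal proves nothing beyond what is already in \cite{EHtourn}. (A smaller unaddressed point: you also need both $N^+(v^\ast)$ and $N^-(v^\ast)$ to have linear size, which a single averaging step does not guarantee.)

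For comparison, the paper does not work inside the neighborhoods of one vertex at all. It first extracts, via a regularity-type theorem, a smooth $(c,\tfrac15,(0,0,0,0,0))$-structure $(V_1,\dots,V_5)$ of five linear-size sets with strong majority orientation between them; it then defines an auxiliary digraph $D$ on $V_1$ (edges given by common inneighbors in $V_5$), uses $C_5$-freeness to show $D$ is outsimplicial and that $D$-edges transfer inneighborhoods in $V_3$, and finally invokes a purely combinatorial lemma: every outsimplicial digraph contains two sets of size $\lfloor n/6\rfloor$ that are either mutually non-adjacent or joined by directed paths in one direction. That lemma is proved via the chordality of the condensation and a balanced-separator argument on a clique tree decomposition, and it is the ingredient that converts structure into \emph{linear-size} complete pairs. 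Your proposal contains no analogue of this mechanism, so the central step remains missing.
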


\section{Regularity Tools} \label{regularity}

We recall some definitions  given in \cite{EHsix}.

Let $c>0$, $0<\lambda<1$ be constants, and let $w$ be a $\{0,1\}$-vector of length $|w|$. Let $T$ be a tournament with $|V(T)|=n$. 
Denote by $tr(T)$ the largest size of the transitive subtournament of $T$. 
For $\varepsilon>0$ we call a tournament $T$  \textit{$\varepsilon$-critical} for $\varepsilon>0$ if $tr(T) < |T|^{\varepsilon}$ but for every proper subtournament $S$ of $T$ we have: $tr(S) \geq |S|^{\varepsilon}$.
A sequence of disjoint subsets 
$(S_{1},S_{2},...,S_{|w|})$ of $V(T)$ is a $(c, \lambda, w)$-{\em structure} if
\begin{itemize} 
\item whenever $w_i=0$ we have $|S_{i}| \geq cn$,  
\item whenever $w_i=1$ the subtournament induced on $S_{i}$ is transitive and $|S_{i}| \geq c \cdot tr(T)$,  
\item $d^+(S_{i},S_{j}) \geq 1 - \lambda$ for all $1 \leq i < j \leq |w|$.
\end{itemize}

We say that a $(c,\lambda,w)$-structure is \textit{smooth} if the last condition of the definition of the
$(c,\lambda,w)$-structure is satisfied in a stronger form, namely we have:
for every $i<j$, every $v \in S_i$ has at most $\lambda|S_j|$ inneighbors in $S_j$, and
every $v \in S_j$ has at most $\lambda|S_i|$ outneighbors in $S_i$.

Theorem 3.5 of \cite{EHsix} asserts:

\begin{theorem}
\label{smooththeorem}
Let $S$ be a tournament, let $w$ be a $\{0,1\}$-vector, and let 
$0 < \lambda < \frac{1}{2}$ be a constant. Then there exist $\varepsilon,c>0$ such that every $S$-free $\varepsilon$-critical tournament contains a smooth $(c, \lambda, w)$-structure.
\end{theorem}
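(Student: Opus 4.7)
The plan is to prove the theorem by induction on $k = |w|$, combined with a final smoothing step that upgrades a (not necessarily smooth) $(c, \lambda', w)$-structure to a smooth one. For the base case $k=1$: if $w_1 = 0$ take $S_1 = V(T)$, and if $w_1 = 1$ take $S_1$ to be a transitive subtournament of size $tr(T)$, which trivially satisfies $|S_1| \ge c \cdot tr(T)$ for any $c \le 1$.

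For the inductive step, apply the theorem (with stronger parameters, i.e. some $\lambda''$ much smaller than $\lambda$) to $w' = (w_1, \ldots, w_{k-1})$, producing a smooth $(c', \lambda'', w')$-structure $(S_1, \ldots, S_{k-1})$ in $T$. The goal is to find $S_k$ disjoint from the previous sets, of the correct type, with each $S_i$ having out-density at least $1-\lambda'$ toward $S_k$. To this end, I would pass to the set $Y$ of vertices $v \in V(T) \setminus \bigcup_{i<k} S_i$ lying in the ``forward cone'' of every $S_i$, meaning that only a $\lambda'$-fraction of $S_i$ points away from $v$. The key claim is $|Y| \ge c'' n$ for $n = |V(T)|$; granting this, if $w_k = 0$ we take $S_k = Y$ (or a subset), and if $w_k = 1$ we apply $\varepsilon$-criticality to the proper subtournament $T[Y]$: since $tr(T[Y]) \ge |Y|^\varepsilon \ge (c'')^\varepsilon n^\varepsilon > (c'')^\varepsilon tr(T)$, a maximum transitive subtournament of $T[Y]$ serves as $S_k$.

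The smoothing step then converts a $(c, \lambda', w)$-structure with $\lambda' \ll \lambda$ into a smooth $(c''', \lambda, w)$-structure: for each pair $i<j$, discard from $S_i$ those $v$ with more than $\lambda|S_j|$ inneighbors in $S_j$, and from $S_j$ those $v$ with more than $\lambda|S_i|$ outneighbors in $S_i$. By Markov's inequality, at most a $\lambda'/\lambda$ fraction of each $S_i$ is discarded per offending pair; since there are at most $\binom{k}{2}$ pairs and $k = |w|$ is fixed, choosing $\lambda'$ sufficiently small relative to $\lambda/k^2$ preserves a linear fraction of each $S_i$ after one pass (or a bounded iteration).

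The main obstacle is establishing $|Y| \ge c'' n$ in the inductive step. Without the $S$-free hypothesis this simply can fail, since in, say, random tournaments the common out-neighborhoods of $k-1$ given sets shrink geometrically with $k$. This is where the $S$-free assumption must enter, presumably through a separate structural lemma from \cite{EHsix} guaranteeing that in an $\varepsilon$-critical $S$-free tournament one can always find linearly large subsets highly biased in one direction relative to any fixed bounded collection of sets. The $\varepsilon$-critical hypothesis then plays a second, complementary role: it converts each linear-sized subset $T[Y]$ into transitive subtournaments of size $\Omega(tr(T))$, which is exactly the bound demanded for $w_i = 1$ entries.
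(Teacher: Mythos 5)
There is a genuine gap, and it sits exactly at the heart of the theorem. (Note also that the paper you were given does not prove this statement at all: it quotes it as Theorem 3.5 of \cite{EHsix}, and only uses a weaker consequence, Theorem~\ref{structurexists}; so the comparison here is with the proof in \cite{EHsix}/\cite{EHtourn}.) Your base case, your use of $\varepsilon$-criticality to convert a linear-sized set $Y$ into a transitive set of size $\Omega(tr(T))$ for the entries $w_i=1$, and your Markov-type cleaning step that upgrades a $(c,\lambda',w)$-structure to a smooth $(c''',\lambda,w)$-structure are all correct and standard. But the inductive step rests entirely on the claim $|Y|\ge c'' n$ for the common ``forward cone'' of the previously constructed sets, and you explicitly do not prove this claim --- you defer it to ``presumably a separate structural lemma from \cite{EHsix}.'' That claim is not a technical side condition: it is equivalent in strength to the existence of the high-density structure itself, i.e.\ to the part of the theorem where the $S$-free hypothesis does all the work. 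Assuming it makes the argument circular.

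Moreover, the one-set-at-a-time induction is not how the known proof goes, and it is not clear it can be made to work: a fixed smooth structure on $k-1$ sets in an $S$-free ($\varepsilon$-critical) tournament need not be extendable by a $k$-th linear set lying in everyone's forward cone, and nothing in your argument forces the previously chosen $S_1,\dots,S_{k-1}$ to be ``extendable'' ones. The actual proof in \cite{EHtourn}/\cite{EHsix} produces all the linear parts simultaneously: one applies a directed regularity lemma, observes that if some $k$ clusters were pairwise regular with both directed densities bounded away from $0$ and $1$ then $S$ could be embedded (contradicting $S$-freeness), and then extracts, via a Ramsey/transitive-subtournament argument on the cluster digraph, a family of clusters that are pairwise almost unanimously oriented; criticality is used only afterwards, exactly as in your step for the coordinates with $w_i=1$, and smoothing is a final cleaning lemma as you describe. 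So your outer scaffolding matches the known argument, but the load-bearing regularity step is missing, and without it the proof does not stand.
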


 Here we need a weaker form  of Theorem~\ref{smooththeorem} for the case when $w$ is the all-zero vector. 
 It turns out that in that case we do  need the criticality assumption. The proof consists of standard regularity lemma arguments, and can be easily reconstructed from the proof of \ref{smooththeorem} and 2.8  in \cite{EHtourn}. Thus we have:

\begin{theorem}\label{structurexists}
Let $S$ be a $k$-vertex tournament and  let $w$ be an  all-zero vector. There exists $c>0$ such that  every $S$-free tournament  contains a smooth $(c, \frac{1}{k}, w)$-structure.
\end{theorem}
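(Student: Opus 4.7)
The plan is a standard directed-regularity argument: apply the directed Szemer\'edi regularity lemma to $T$, use $S$-freeness to clean up the reduced tournament, extract a long transitive chain there, and finally clean each part pointwise to obtain smoothness. Set $m=|w|$, $\lambda=1/k$, and fix a regularity parameter $\eta$ much smaller than $\lambda$. Apply the directed Szemer\'edi regularity lemma with parameter $\eta$ and with a minimum number of parts $N_0=N_0(m,k,\eta)$ to be chosen large enough, producing a partition $V(T)=V_0\cup V_1\cup\cdots\cup V_N$ with $|V_0|\leq \eta n$, equal part sizes $|V_1|=\cdots=|V_N|\geq (1-\eta)n/N$, and all but at most $\eta\binom{N}{2}$ of the pairs $(V_i,V_j)$ being $\eta$-regular.

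Form the \emph{reduced tournament} $R$ on $[N]$ by orienting $i\to j$ whenever $(V_i,V_j)$ is $\eta$-regular and $d^+(V_i,V_j)\geq 1-\eta$. Call a pair \emph{mixed} if it is $\eta$-regular but both $d^+(V_i,V_j)$ and $d^+(V_j,V_i)$ lie in $(\eta,1-\eta)$. This is where $S$-freeness enters: by the standard directed counting lemma, any $k$ parts pairwise joined by mixed $\eta$-regular pairs contain every $k$-vertex tournament, in particular $S$. Hence by a Ramsey/Kov\'ari--S\'os--Tur\'an type bound, the number of mixed pairs is at most $\delta\binom{N}{2}$ for some $\delta=\delta(k)>0$; otherwise one could locate $k$ such parts, contradicting that $T$ is $S$-free.

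Consequently $R$ has at most $(\eta+\delta)\binom{N}{2}$ non-edges. Choosing $N_0$ large enough, the classical fact that every tournament on $N$ vertices has a transitive subtournament of size $\Omega(\log N)$, combined with a short cleanup to avoid the sparse set of non-edges, yields indices $j_1,\ldots,j_m$ with $j_s\to j_t$ in $R$ for all $s<t$. The sets $V_{j_1},\ldots,V_{j_m}$ are pairwise disjoint, each of size $\Omega(n)$, and satisfy $d^+(V_{j_s},V_{j_t})\geq 1-\eta$ whenever $s<t$. A final cleaning pass --- iteratively deleting vertices having more than $\lambda|V_{j_t}|$ inneighbors in some forward $V_{j_t}$ or more than $\lambda|V_{j_r}|$ outneighbors in some backward $V_{j_r}$ --- discards only an $O(m\eta/\lambda)$ fraction of each part by Markov, so the resulting sets $S_1,\ldots,S_m$ still have size at least $cn$ and form a smooth $(c,1/k,w)$-structure.

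The main obstacle is the second step: bounding the number of mixed pairs using $S$-freeness. This is the only nontrivial use of the hypothesis, and it rests on the directed counting lemma asserting that any fixed $k$-vertex tournament can be embedded once one has $k$ parts with $\eta$-regular bidirectional density. Everything else is standard regularity bookkeeping together with the Ramsey bound for tournaments and a Markov-type smoothing step. Note that criticality is never invoked, in contrast to Theorem~\ref{smooththeorem}, because here we only need linear-sized parts rather than parts of size $\Omega(\mathrm{tr}(T))$ with transitivity.
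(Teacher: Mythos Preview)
Your approach is exactly the ``standard regularity lemma argument'' that the paper alludes to (the paper gives no proof of its own, only a pointer to \cite{EHsix} and \cite{EHtourn}), so in spirit you are doing the same thing. Two technical points deserve care, though.

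First, using the single parameter $\eta$ both as the regularity parameter and as the density threshold defining ``mixed'' makes the counting-lemma step fail as written. If a mixed pair only guarantees $d^{+}\geq\eta$ in each direction, then for $k$ pairwise mixed $\eta$-regular parts the embedding/counting lemma gives a main term of order $\eta^{\binom{k}{2}}$ against an error of order $\eta$, which is the wrong way round. The fix is routine: introduce an intermediate threshold $\tau$ with $\eta\ll\tau\ll\lambda/m$, declare $i\to j$ in $R$ when $d^{+}(V_i,V_j)\geq 1-\tau$, and call a regular pair mixed when $d^{+}\in(\tau,1-\tau)$. Then the counting lemma embeds $S$ into any $k$ pairwise mixed regular parts, and the final Markov cleanup still discards only an $O(m\tau/\lambda)=o(1)$ fraction of each part.

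Second, the bound you get on mixed pairs is not ``sparse'': Tur\'an only yields $\delta=1-\tfrac{1}{k-1}$, so for large $k$ almost all pairs may be non-edges of $R$. Hence ``a short cleanup to avoid the sparse set of non-edges'' undersells what is needed. What actually works is a genuine Ramsey step: the underlying graph of $R$ still has edge density at least $\tfrac{1}{k-1}-\eta$, so for $N_0$ large enough one finds (greedily, or via Ramsey) a clique of size $2^{m}$ in it, and then a transitive subtournament of size $m$ inside that clique. With these two adjustments your outline goes through and matches the intended proof.
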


\section{A lemma on outsimplicial directed graphs}

We say that a directed graph is {\em outsimplicial} if the outneighborhood of each vertex is a clique in the underlying undirected graph.

Next we prove the main lemma that we use for the proof of  Theorem \ref{mainC5}. The ideas in the proof of
Lemma~\ref{outsimp} seem to go beyond the particular setup of the lemma. In fact, they were later used in
\cite{PP10} to prove the analogue of Theorem~\ref{mainC5} to three $6$-vertex tournaments containing
$C_5$.

\begin{lemma}\label{outsimp}
If $D$ is an outsimplicial directed graph on $n>1$ vertices, then there exist two disjoint subsets  $A$ and $B$ of $V(D)$, both of size at least $\lfloor n/6 \rfloor$, such that either
\begin{enumerate}
\item[(i)] there is no edge, in any direction, between a vertex of $A$ and a vertex of $B$, 
or 
\item[(ii)] there is a path from every vertex of $A$ to every vertex of $B$. 
\end{enumerate}
\end{lemma}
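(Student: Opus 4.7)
My plan is to analyze the strongly connected component (SCC) decomposition of $D$. Let $\mathcal{D}$ be the condensation of $D$ and, for each SCC $X$, write $d(X)$, $u(X)$ for the total number of vertices lying in SCCs that reach $X$, respectively are reached from $X$ (including $X$ itself in each case). I proceed by case analysis on the structure of $\mathcal{D}$.

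First, if some SCC $X$ satisfies $|X|\ge n/3$, then $X$ is strongly connected, so any partition of $X$ into two subsets $A,B$ of size at least $\lfloor n/6\rfloor$ (possible since $|X|\ge 2\lfloor n/6\rfloor$) immediately gives (ii). So I may assume every SCC has $|X|<n/3$. Second, if some $X^\star$ satisfies both $d(X^\star)-|X^\star|\ge n/6$ and $u(X^\star)-|X^\star|\ge n/6$, I let $A$ be the union of all SCCs strictly below $X^\star$ in $\mathcal{D}$ and $B$ the union of all SCCs strictly above $X^\star$. Then $A\cap B=\emptyset$ by antisymmetry of reachability, both sets have size at least $\lfloor n/6\rfloor$, and every vertex of $A$ reaches $X^\star$ and then every vertex of $B$, so (ii) holds.

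The remaining case is when no such balanced $X^\star$ exists, i.e., every SCC $X$ satisfies $d(X)-|X|<n/6$ or $u(X)-|X|<n/6$. Here I aim to produce (i) by exhibiting an antichain in $\mathcal{D}$ of total vertex weight at least $n/3$; since every SCC has size less than $n/3$, such an antichain can be partitioned into two subsets of weight at least $\lfloor n/6\rfloor$ each, and incomparable SCCs have no edges between them in $D$. The outsimplicial hypothesis enters the proof essentially here: for every vertex $v$, since $N^+(v)$ is a clique, the SCCs of the out-neighbors of $v$ form a chain under reachability. This controls how edges cross between comparable branches of $\mathcal{D}$, and combined with the absence of a balanced SCC it should force the required wide antichain.

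I expect this last case to be the main obstacle. Deriving a wide antichain under the no-balanced-SCC hypothesis together with the outsimplicial per-vertex chain structure is delicate: one has to rule out both chain-like DAGs (in which the existence of a balanced SCC can be recovered by an intermediate-value argument along the chain) and configurations where an antichain of weight $\ge n/3$ does exist but is dominated by a single SCC that prevents a balanced split. I anticipate organizing the SCCs by topological level of $\mathcal{D}$ and combining a pigeonhole on these levels with the chain structure of each $N^+(v)$, possibly proceeding by induction on the number of SCCs to exhibit the antichain.
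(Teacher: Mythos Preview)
Your proposal is essentially incomplete: cases 1 and 2 are fine, but case 3 is the heart of the matter and you have only vague plans (``I anticipate organizing the SCCs by topological level \ldots''), not an argument. Two concrete gaps:

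\medskip
\textbf{(a) The condensation is itself outsimplicial, and this needs proof.} Your per-vertex observation (the SCCs of $N^+(v)$ form a chain) is weaker than what is actually needed. If $C$ has edges into two other SCCs $C'$ and $C''$, those edges may leave $C$ from \emph{different} vertices $v_j,v_k\in C$, so your observation does not directly relate $C'$ and $C''$. The paper handles this with a shortest-directed-path argument inside $C$: take $v_j,v_k$ at minimum directed distance in $C$ and look at the out-neighbour of $v_j$ on this path to derive a contradiction. Once the condensation $F$ is outsimplicial, its underlying undirected graph $F'$ is chordal (any induced cycle of length $\ge 4$ would have to be directed, contradicting acyclicity of $F$).

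\medskip
\textbf{(b) Aiming for an antichain is stronger than what is needed, and the paper does not do this.} For (i) you only need two collections of SCCs with no $F$-\emph{edges} between them, not two collections that are pairwise incomparable under reachability. The paper replaces your ``balanced SCC'' case by the (different) observation that no \emph{clique} of $F'$ has weight $\ge n/3$: such a clique is a transitive subtournament of $F$, and a balanced split along its transitive order gives (ii). Then chordality of $F'$ gives a tree decomposition whose bags are cliques; a standard balanced-separator lemma yields a bag $X_v$ of weight $<n/3$ such that the components of $F'\setminus X_v$ (total weight $\ge 2n/3$, each of weight $\le n/2$) can be grouped into two parts of weight $\ge\lfloor n/6\rfloor$ with no $F'$-edges between them. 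These two parts may still be comparable through $X_v$, so they are \emph{not} in general an antichain --- your target in case 3 is strictly harder than what suffices.

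\medskip
I do not see how to complete your case 3 from ``no balanced SCC'' plus the per-vertex chain property without essentially rediscovering the chordal/tree-decomposition route; in particular, level-by-level pigeonhole on $\mathcal{D}$ does not by itself produce a wide antichain, and you yourself flag the obstruction where a candidate antichain is dominated by a single SCC.
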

\begin{proof}
Assume this is false. Then $D$ is not strongly connected, and, moreover, every strongly connected component $C$ of $D$
has size at most $n/3$, for otherwise a balanced partition of $C$ satisfies (ii).  
Let $C_1,\dots,C_m$ be the strongly connected components of $D$.
Let $F$ be the directed graph with vertex set $C_1,\dots,C_m$, and such that $C_i$ is adjacent to $C_j$ if and only if there is an edge from $C_i$ to $C_j$ in $D$. For $S \in V(F)$ let $w(S)= \sum_{C_i \in S}|V(C_i)|$. Note that $F$ is an acyclic directed graph. Let $F'$ be the underlying undirected graph of $F$.
\\
\\
{\em (1) $F$ is outsimpicial.}
\\
\\
To see this, suppose to the contrary that $C_i$ is adjacent to $C_j$ and to $C_k$ but there is no edge from $C_j$ to $C_k$ in $F'$.
Then in $D$, no vertex of $C_j$ is adjacent  to or from  a vertex of $C_k$. 
Let  $v_j,v_k \in C_i$, $u_j \in C_j$, $u_k \in C_k$ be such that $(v_j,u_j)$ and $(v_k,u_k)$ are edges in $D$. 
Since $D$ is outsimplicial and $u_j$ $u_k$ are not  in $D$, it follows that $v_j \neq v_k$. 
We may assume that 
$v_j$, $v_k$ are chosen so
that the directed path $P$ in $C_i$ from $v_j$ to $v_k$ is as short as possible.
Let $p$ be the outneighbor of $v_j$ in $P$. It follows from the minimality of
$P$ that $(p,u_k)$ is not an edge of $D$. Since $C_i$ and $C_k$ are distinct strongly connected components of $D$,
it follows that $(u_j,p)$ is not an edge. But now $p$ and $u_j$ are both outneighbors of $v_j$,
and there is no edge between then in either direction, contrary to the fact that $D$ is outsimplicial.
This proves (1).
\\
\\
{\em (2) $F'$ is chordal.}
\\
\\Indeed, suppose $C$ is an induced cycle of length larger than 3 in $F'$. Since $F$ is outsimplicial,
no vertex of $C$ has two outneighbors in $C$, and therefore $C$ is directed, a
contradiction to the fact that $F$ is acyclic. 
\\
\\
{\em  (3) No clique of $F'$ has weight $n/3$.} 
\\
\\
Indeed, suppose that $K$ be a clique of weight $n/3$. Then $K$ is a transitive subtournament of $F$.
Let $C_{k_1},\dots,C_{k_p}$ be the vertices of $K$ in the  transitive order. Then there is a path in $D$ from every vertex of $C_{k_i}$ to every vertex of $C_{k_j}$ for $i\le j$. For every $i\in[p]$, choose some order on the vertices in $C_{k_i}$, and consider the corresponding order $(C_{k_1}, \dots, C_{k_p})$ on $V_K=\bigcup_{i=1}^p C_{k_i}$. Let $A$ be the first $\frac{|V_K|}{2}$ vertices in this order and let $B=V_K\setminus A$. Then $A,B$ satisfy (ii), contradicting our assumption.  
This proves (3).
\\

Since $F'$ is chordal, $F'$ has a tree decomposition with bags being cliques of $F'$.
Let $(T,X)$ be such a tree decomposition, where the bag corresponding to a vertex $v$ of $T$ is denoted $X_v$. 

By Lemma 7.19 in \cite{CFK} there exists a bag $X_v$ of $T$ such that every connected component $D$ of $F' \setminus X_v$ has weight at most $w(V(F'))/2$. In particular, $F' \setminus X_v$ contains at least two connected components. 
Write $x = |V(F')\setminus X_v| \ge 2n/3$ (the inequality holds by (3)). 
Let $X_1,\dots, X_m$ be the connected components in $F' \setminus X_v$, ordered such that 
 $x_1 \le x_2 \le ... \le x_m$, where $x_i = |X_i|$. 
Then $x_i \le n/2$ for all $i$, and thus $m\ge 2$, showing $x_1\le x/2$. 
Write $y_i = \sum_{j=1}^i x_j$. 
Let $r$ be the index for which 
$y_{r-1} \le x/2$ and
$y_r > x/2$. 

Suppose first that $r=m$. Then $a=y_{r-1}$ and $b=x_r$ are both at least of size $n/6$, since 
$a = x-x_r\ge 2n/3 - n/2 =n/6$ and
$b = x-y_{r-1} \ge x - x/2 =x/2 \ge n/3.$ Therefore the sets $A = \bigcup_{i=1}^{r-1} X_i$ and $B=X_r$, of sizes $a$ and $b$ respectively, satisfy (i), a contradiction.  
It follows that  $r<m$; let $a=y_{r}$ and $b=x-y_r$. 
Note that $x_r \le x_{r+1}\le b$, and thus
$x/2 + 2b \ge y_{r-1}+ x_r + b =x$, showing
$b \ge x/4 \ge n/6$, and
$a \ge x/2 \ge n/3$ by the choice of $r$.  Therefore the sets $A = \bigcup_{i=1}^{r} X_i$ and $B=\bigcup_{i=r+1}^{m} X_i$, of sizes $a$ and $b$ respectively, satisfy (i), again a contradiction. Thus the lemma is proved. 
\end{proof}

\section{Proof of  Theorem \ref{mainC5}.} 
Let $T$ be a $C_5$-free tournament on $n$ vertices, and let $c$ be as in Theorem~\ref{structurexists} applied
with $S=C_5$. We show that there exist a constant $c$ and disjoint subsets $A$, $B$ of $V(T)$, such that $|A|=|B| = cn/6$ and    $A$ is complete to $B$. 

Assume to the contrary that this is false. 
Let $w$ be the zero vector of length $5$. 
By Theorem~\ref{structurexists} there exists $c>0$ and a smooth $(c, \frac{1}{5}, (0,0,0,0,0))$-structure  $\mathcal{S}=(V_{1},...,V_{5})$. By definition, we have that $|V_i| \ge cn$ for all $1\le i\le 5 $,  and for each $v_i \in V_i$, 
if $j>i$, then the number of inneighbors of $v_i$ in $V_j$ is at most $\frac{1}{5}|V_j|$, and
if $j<i$ then the number of outneighbors of $v_i$ in $V_j$ is at most $\frac{1}{5}|V_j|$. 
It follows that, if $j>i$, then the number of outneighbors of $v_i$ in $V_j$ is at least $\frac{4cn}{5}$, and
if $j<i$ then the number of inneighbors of $v_i$ in $V_j$ is at least $\frac{4cn}{5}$.

We now define a directed graph $D$ on the set of vertices $V_1$ in the following way: there is an edge between two vertices $u_1,v_1$ of $V_1$ if and only if they have a common inneighbor in $V_5$, and in this case the direction of the edge is the same as the direction of the edge between these two vertices in $T$.

\begin{claim}
If $(u_1,v_1)$ is an edge in $D$ then \begin{equation}\label{eq1}
    N^-(u_1) \cap V_3 \subseteq N^-(v_1) \cap V_3.
\end{equation}
\end{claim}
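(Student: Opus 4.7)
The plan is to argue by contradiction. Suppose for some $w \in N^-(u_1) \cap V_3$ we have $v_1 \to w$ rather than $w \to v_1$; I will derive a contradiction by producing a copy of $C_5$ in $T$.

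Since $(u_1,v_1)$ is an edge of $D$, by the definition of $D$ there is a common inneighbor $z \in V_5$ of $u_1$ and $v_1$, so $z \to u_1$ and $z \to v_1$. I next select a ``middle'' vertex $a \in V_2$ all of whose edges to $u_1, v_1, w, z$ are oriented typically in the smooth structure: $u_1 \to a$, $v_1 \to a$, $a \to w$, and $a \to z$. With $\lambda = 1/5$, each of these four conditions fails for at most $|V_2|/5$ vertices of $V_2$, so a union bound gives at least $|V_2|/5 \geq cn/5 > 0$ valid choices of $a$.

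I then consider the induced 5-vertex subtournament on $X = \{u_1, v_1, a, w, z\}$. Nine of its ten edges are already fixed by the hypotheses: $u_1 \to v_1$, $u_1 \to a$, $v_1 \to a$, $v_1 \to w$, $a \to w$, $a \to z$, $w \to u_1$, $z \to u_1$, and $z \to v_1$; only the edge between $w$ and $z$ is undetermined. If $w \to z$, then every vertex of $X$ has out-degree exactly $2$, and since $C_5$ is the unique regular tournament on 5 vertices, $X$ is a $C_5$. Explicitly, the Hamiltonian cycle $u_1 \to v_1 \to a \to w \to z \to u_1$ together with the pentagram edges $u_1 \to a$, $v_1 \to w$, $a \to z$, $w \to u_1$, $z \to v_1$ realizes the $C_5$-structure, contradicting the $C_5$-freeness of $T$.

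The main obstacle is the remaining case $z \to w$, in which $X$ fails to be a $C_5$ (vertex $z$ then has out-degree $3$ inside $X$ and $w$ has out-degree $1$). I plan to resolve this case by substituting a vertex $b$ drawn from $V_4$ for $z$, namely a common inneighbor $b \in V_4$ of $u_1$ and $v_1$ satisfying $w \to b$: the analogous verification (now using that $a \to b$ is the typical orientation for $V_2$ to $V_4$) shows that $\{u_1, v_1, a, w, b\}$ is a $C_5$ via the cycle $u_1 \to v_1 \to a \to w \to b \to u_1$ and pentagram edges $u_1 \to a$, $v_1 \to w$, $a \to b$, $w \to u_1$, $b \to v_1$. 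The delicate point---and what I expect to be the crux of the proof---is guaranteeing that such a replacement vertex always exists in the worst sub-case, namely when every common inneighbor of $u_1, v_1$ in $V_5$ is an inneighbor of $w$; resolving this cleanly will likely require a further refinement of the smooth structure or an additional counting step.
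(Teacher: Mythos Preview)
Your first case ($w\to z$) is correct and matches the paper's argument exactly. The gap is in the second case ($z\to w$). Your plan calls for a vertex $b\in V_4$ that is a \emph{common inneighbor} of $u_1$ and $v_1$, i.e.\ $b\to u_1$ and $b\to v_1$. But in the smooth $(c,\tfrac15,w)$-structure the edges between $V_1$ and $V_4$ are typically oriented \emph{from} $V_1$ \emph{to} $V_4$: each of $u_1,v_1$ has at most $|V_4|/5$ inneighbors in $V_4$. Two sets of size at most $|V_4|/5$ need not intersect, so no counting argument can manufacture such a $b$; the ``additional counting step'' you anticipate simply does not exist. The approach of replacing $z$ by a $V_4$-vertex playing the same role (inneighbor of both $u_1$ and $v_1$) is a dead end.

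The paper handles the case $z\to w$ very differently. It abandons $v_1$ altogether and builds a $C_5$ on $\{u_1,w,z,v_2,v_4\}$, where $v_2\in V_2$ and $v_4\in V_4$ are chosen with \emph{typical} orientations relative to $u_1,w,z$ (so $u_1\to v_4$, $w\to v_4$, $v_4\to z$ and $u_1\to v_2$, $v_2\to w$, $v_2\to z$); these constraints each cut off at most a fifth, leaving sets $N_2\subseteq V_2$, $N_4\subseteq V_4$ of size at least $2cn/5$. The only remaining edge is between $v_2$ and $v_4$: if some pair has $v_4\to v_2$, one checks that $\{w,u_1,v_4,v_2,z\}$ is a $C_5$. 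Otherwise $N_2$ is complete to $N_4$, and this is where the paper uses something you did not: the Claim is being proved inside a proof of Theorem~\ref{mainC5} by contradiction, so a linear-size pair $(N_2,N_4)$ with $N_2$ complete to $N_4$ already contradicts the standing negation hypothesis. Your proposal never invokes this global assumption, which is why the second case cannot close.
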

\begin{proof}
Assume $v_3 \in V_3$ is an inneighbor of $u_1$, but an outneighbor of $v_1$, and let $v_5 \in V_5$ be a common inneighbor of $u_1$ and $v_1$.  
Suppose first $(v_3, v_5) \in E(T)$. We claim that there exists a vertex $v_2 \in V_2$ such that $\{(v_2,v_3), (v_2,v_5), (u_1,v_2), (v_1,v_2)\} \subset E(T)$. Indeed,  the number of outneighbors of each of $u_1$ and $v_1$ in $V_2$ is at least $\frac{4cn}{5}$, and thus the number of common outneighbors of $u_1$ and $v_1$ in $V_2$ is at least $\frac{3cn}{5}$. Let $O_2 \in V_2$ be the set common outneighbor of $u_1,v_1$ in $V_2$. Since the number of outneighbors of $v_5$ in $O_2$ is at most $\frac{cn}{5}$, and the number of outneighbors of $v_3$ in $O_2$ is at most $\frac{cn}{5}$,  there must exist a vertex $v_2 \in O_2$ that is an inneighbor of both $v_3$ and $v_5$. Now $(u_1,v_1,v_2,v_3,v_5)$ form a $C_5$ subtournament of $T$, a contradiction.  

So assume $(v_5, v_3) \in E(T)$. Let $O_4 \in V_4$ be the set of common outneighbors of $u_1$ and $v_3$. Then, as before, $|O_4| \ge \frac{3cn}{5}$. Since the number of inneighbors
$v_5$ in $V_4$ is at least $\frac{4cn}{5}$, there exists a set $N_4 \subset O_4$ of size at least $\frac{2cn}{5}$ such that every vertex $v \in N_4$ has $(u_1,v),(v_3,v),(v,v_5) \in E(T)$. Similarly, there exists a set $N_2 \subset V_2$ of size at least $\frac{2cn}{5}$, such that every vertex $v\in N_2$ has $(u_1,v),(v,v_3),(v,v_5) \in E(T)$. 
If there exist $v_2\in N_2, v_4 \in N_4$ such that $(v_4,v_2) \in E(T)$ then $(v_3, u_1, v_4, v_2, v_5)$ is a $C_5$ subtournament of $T$, a contradiction. Otherwise $N_2$ is complete to $N_4$, contradicting our negation assumption.  
\end{proof}

\begin{claim}
 $D$ is an outsimplicial directed graph. 
 \end{claim}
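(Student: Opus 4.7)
The plan is to prove the claim by contradiction. Suppose $D$ is not outsimplicial: there exist $u_1, v_1, w_1 \in V_1$ with $(u_1, v_1), (u_1, w_1) \in E(D)$, but $v_1$ and $w_1$ are not adjacent in the underlying undirected graph of $D$. Since $T$ is a tournament, I may assume $(v_1, w_1) \in E(T)$ without loss of generality; the non-adjacency in $D$ then says $v_1$ and $w_1$ have no common inneighbor in $V_5$.

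First I would identify the relevant $V_5$ witnesses. Since $(u_1, v_1) \in E(D)$, there is $a_5 \in V_5$ with $a_5 \to u_1$ and $a_5 \to v_1$; similarly there is $b_5 \in V_5$ with $b_5 \to u_1$ and $b_5 \to w_1$. The non-adjacency assumption immediately forces $a_5 \ne b_5$ (otherwise their common value would be a common inneighbor of $v_1$ and $w_1$), $w_1 \to a_5$ (otherwise $a_5$ would itself be such a common inneighbor), and $v_1 \to b_5$ symmetrically. Since $T$ is a tournament, exactly one of $a_5 \to b_5$ or $b_5 \to a_5$ holds, and the case split on this single edge is the engine of the argument.

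In each case I would exhibit a $C_5$-subtournament using an auxiliary vertex $v_2 \in V_2$. If $b_5 \to a_5$, the five-vertex set $\{u_1, v_1, v_2, a_5, b_5\}$ should form $C_5$: the six already-established edges give each of $u_1, v_1, a_5, b_5$ out-degree exactly $1$ or $2$ within that set, and these out-degrees become exactly $2$ precisely when the four edges incident to $v_2$ take the values $u_1 \to v_2$, $v_1 \to v_2$, $v_2 \to a_5$, $v_2 \to b_5$, which simultaneously puts $v_2$ at out-degree $2$ among the five; the resulting regular $5$-tournament is necessarily $C_5$. Each of the four requirements on $v_2$ is the typical direction in the smooth $(c, \frac{1}{5}, w)$-structure, so fails for at most $|V_2|/5$ vertices, and a union bound leaves at least $|V_2| - 4 \cdot |V_2|/5 \ge cn/5 > 0$ candidates. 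If instead $a_5 \to b_5$, the symmetric five-set $\{u_1, w_1, v_2, a_5, b_5\}$ plays the identical role with $w_1$ in place of $v_1$ and the conditions $u_1 \to v_2$, $w_1 \to v_2$, $v_2 \to a_5$, $v_2 \to b_5$ on $v_2$. In either case a $C_5$-subtournament appears in $T$, contradicting $C_5$-freeness.

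The main conceptual point, and the only place needing care, is the realization that the direction of the lone edge between $a_5$ and $b_5$ is exactly what decides whether $v_1$ or $w_1$ belongs to the $C_5$; once the correct quintuple is chosen, verifying that all ten edges respect the $C_5$-pattern is a bookkeeping check on out-degrees, and the existence of $v_2$ is a routine four-fold inclusion–exclusion using $\lambda = \frac{1}{5}$. I do not expect to invoke Claim~4.1 (equation~\eqref{eq1}) at any point in this claim; that inclusion appears instead to be reserved for translating the conclusion of \lref{outsimp}, once applied to $D$, back into a complete-to pair inside $T$ in the remainder of the proof of \tref{mainC5}.
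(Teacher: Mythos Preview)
Your proof is correct and follows essentially the same approach as the paper: both assume the two $V_5$-witnesses (your $a_5,b_5$, the paper's $x_5,y_5$) fail to witness a $D$-edge between $v_1$ and $w_1$, case-split on the direction of the edge between them, and in each case complete the appropriate four vertices among $\{u_1,v_1,w_1,a_5,b_5\}$ to a $C_5$ using one auxiliary vertex obtained by a union bound in the smooth structure. The only cosmetic differences are that the paper picks its auxiliary vertex $z_3$ from $V_3$ rather than $V_2$, and absorbs your explicit case-split into a ``without loss of generality''; also, your assumption $(v_1,w_1)\in E(T)$ is never actually used and can be dropped.
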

\begin{proof}
In order to prove this, consider three vertices $u_1, v_1, w_1 \in V_1$ such that $(u_1, v_1), (u_1, w_1) \in E(D)$. We need to prove that there is a $D$ edge between $v_1$ and $w_1$ in some direction, i.e., that $v_1$ and $w_1$ have a common inneighbor in $V_5$. 
Let $x_5 \in V_5$ be the common inneighbor of $u_1$ and $v_1$ and 
let $y_5 \in V_5$ be the common inneighbor of $u_1$ and $w_1$.  
Assume for the
sake of contradiction that $x_{5}$ is an outneighbor of $w_{1}$.
Without loss of generality, $(x_5, y_5) \in E(T)$.
As before, there exists a set $O_3 \subset V_3$ of size at least $\frac{3cn}{5}$ such that every vertex $v\in O_3$ is an outneighbor of both $u_1$ and $w_1$, and there exists a set $I_3 \subset V_3$ of size at least $\frac{3cn}{5}$ such that every vertex $v\in I_3$ is an inneighbor of both $x_5$ and $y_5$. Thus there exists a vertex $z_3 \in O_3 \cap I_3$, and 
 $(u_1, w_1, z_3, x_5, y_5)$ is a $C_5$ subtournament in $T$, a contradiction. Therefore $x_5$ must also be an inneighbor of $w_1$, proving our claim.
\end{proof}

Now, by  Lemma \ref{outsimp} there exist sets $A$ and $B$ of $V(D)$, each  of size $|V(D)|/6 \geq cn/6$,  satisfying either (i) or (ii). 
Let $C$ be the set of vertices complete from $A$ in $T$. 

In case (i), there is no edge in $D$ between $A$ and $B$ in any direction, implying that no two vertices $a\in A$ and $b\in B$ have a common inneighbor in $V_5$. Thus the set $V_5 \setminus C$ is complete from $B$, and either $C$ or $V_5 \setminus C$ is of size at least $\frac{cn} {2}$, a contradiction.

In case (ii), there is a directed path from every vertex in $A$ to every vertex of $B$. 
Let $v \in V_3\setminus C$. Then $v$ is an inneighbor of some $a\in A$.  Let $b \in B$. There is a directed path $P$ in $D$  from $a$ to $b$. By  (\ref{eq1}), using induction on the length of $P$, $v$ is an inneighbor of every $p$ in $P$, and in particular $v$ is an inneighbor of $b$. 
It follows that $V_3\setminus C$ is complete to $B$.  Since  either $C$ or $V_3 \setminus C$ is of size at least $\frac{cn} {2}$, we get a contradiction. 
This concludes the proof of the theorem.

\end{document}